\theoremstyle{plain}\newtheorem{remark}[thm]{Remark}
\theoremstyle{plain}\newtheorem{example}[thm]{Example}
\theoremstyle{plain}\newtheorem{definition}[thm]{Definition}
\theoremstyle{plain}
\newcommand{\lra}{\longrightarrow}
\newcommand{\ur}{\uparrow}
\newcommand{\dr}{\downarrow}
\begin{document}

\title[Faithfulness of directed
complete posets]{Faithfulness of directed
complete posets based on Scott closed
set lattices}
\author[D. Zhao]{Zhao Dongsheng}	%required
\address{National Institute of Education, Nanyang Technological University, 1 Nanyang Walk, Singapore 637616}	%required
\email{dongsheng.zhao@nie.edu.sg}  %optional
\author[L. Xu]{Xu Luoshan}	%optional
\address{Department of Mathematics, Yangzhou University, Yangzhou 225002, China}	
\email{luoshanxu@hotmail.com}  %optional

%% etc.

%% required for running head on odd and even pages, use suitable
%% abbreviations in case of long titles and many authors:

%% mandatory lists of keywords and classifications:
\keywords{dcpo; sober space; Scott topology; SCL-faithful dcpo}
\subjclass[2010]{06B30; 06B35; 54C35}
%%%%%%%%%%%%%%%%%%%%%%%%%%%%%%%%%%%%%%%%%%%%%%%%%%%%%%%%%%%%%%%%%%%%%%%%%%%

%% the abstract has to PRECEED the command \maketitle:
%% be sure not to issue the \maketitle command twice!

\begin{abstract}
  \noindent By Thron, a topological space $X$ has the property that $C(X)$ isomorphic to $C(Y)$ implies $X$ is homeomorphic to $Y$ iff $X$ is sober and $T_D$, where $C(X)$ and $C(Y)$ denote the lattices of closed sets of $X$ and $T_0$ space $Y$, respectively. When we consider dcpos (directed complete posets) equipped their Scott topologies, a similar question arises: which dcpos $P$ have the property that for any dcpo $Q$, $C_{\sigma}(P)$ isomorphic to $C_{\sigma}(Q)$ implies $P$ is isomorphic to $Q$ (such a dcpo $P$ will be called Scott closed set lattice faithful,  or SCL-faithful in short)? Here $C_{\sigma}(P)$ and $C_{\sigma}(Q)$ denote the lattices of Scott closed sets of $P$ and $Q$, respectively. Following a characterization of continuous (quasicontinuous) dcpos in terms of $C_{\sigma}(P)$, one easily deduces that every continuous (quasicontinuous) dcpo is SCL-faithful. Note that  the Scott space of every  continuous (quasicontinuous) dcpo is sober. Compared  with  Thron's result, one naturally asks whether every SCL-faithful dcpo is sober (with the Scott topology). In this paper we shall prove that some classes of dcpos are SCL-faithful, these classes contain some  dcpos whose Scott topologies are not bounded sober. These results will help to obtain a complete characterization of SCL-faithful dcpos in the future.
\end{abstract}

\maketitle

%% start the paper here:

\section{Introduction}
In \cite{thron-1962}, Thron proved the  interesting result: a topological space $X$ has the property that $C(X)$ isomorphic to $C(Y)$ implies $X$ is homeomorphic to $Y$ iff $X$ is sober and $T_D$, where $C(X)$ and $C(Y)$ denote the lattices of closed sets of  $X$ and $T_0$ space $Y$, respectively.
A directed complete poset (dcpo, for short)  $P$ will be called Scott closed set lattice faithful, or SCL-faithful in short if for any dcpo $Q$, $P$ is isomorphic to $Q$ whenever the Scott-closed-set lattice $C_{\sigma}(P)$  of $P$ and $C_{\sigma}(Q)$ of $Q$ are isomorphic. One of the classic result in domain theory is that a dcpo $P$ is continuous iff the lattice $C_{\sigma}(P)$  is a completely distributive lattice (Theorem II-1.14 of \cite{comp}). From this it follows  that every continuous dcpo is SCL-faithful. In a similar way, one deduces that every quasicontinuous dcpo is SCL-faithful.
Compared  with  Thron's result, one naturally asks whether every SCL-faithful dcpo is sober in their Scott topology.

In \cite{johnstone-81}, Johnstone constructed the first dcpo whose Scott topology is not sober. Later Isbell \cite{isbell-1982a} constructed a complete lattice whose Scott topology is not sober and Kou \cite{kouhui} gave a dcpo whose Scott topology is well-filtered but not sober. In this paper, we will prove that some classes of dcpos, including all quasicontinuous dcpos as well as Johnstone's and Kou's examples, are SCL-faithful. The full characterization of all SCL-faithfull dcpos is still open.\\

\section{Preliminaries}

For any subset $A$ of a poset $P$, let $\ur\!\!A=\{x\in\!P: y\le\!x \mbox{ for some }y\in\!A\}$ and $\downarrow\!A =\{x\in\!P: x\le\!y \mbox{ for some } y\in\!A\}$. A subset $A$ is called an upper set if $A=\ur\!A$, and a lower set if $A=\downarrow\!A$.
A subset $U$ of a poset $P$ is Scott open  if (i) $U=\ur\!U$  and (ii) for
any directed subset $D$, $\bigvee D\in U$ implies $D\cap U\not=\emptyset$, whenever $\bigvee D$ exists.
All Scott open sets of a poset $P$ form a topology on $P$, denoted by $\sigma(P)$ and called the Scott topology on $P$. The complements of Scott open sets are called Scott closed sets. Clearly, a subset $A$ is Scott closed iff (i) $A=\dr A$ and (ii) for any directed subset $D\subseteq A$, $\bigvee D\in A$ whenever $\bigvee\!D$ exists. The set of all Scott closed sets of $P$ will be denoted by $C_{\sigma}(P)$. The space $(P, \sigma(P))$ is denoted by $\Sigma\!P$, called the Scott space of $P$ (See \cite{comp} for more about Scott spaces).

A poset $P$ is directed complete if its every directed subset has a supremum. A directed complete poset is briefly called a dcpo.

A subset $A$ of a topological space is irreducible if $A\subseteq F_1\cup F_2$ with $F_1$ and $F_2$ closed, then  $A\subseteq F_1$ or $A\subseteq F_2$ holds. The set of all nonempty irreducible closed subsets of space $X$ will be denoted by $Irr(X)$.

For any $T_0$ topological space $(X, \tau)$, the specialization order $\le_{\tau}$ on $X$ is defined by $x\le_{\tau} y$ iff $x\in cl(\{y\})$ where $``cl(\cdot)"$ means taking closure.
\begin{remark}

\label{irreducible sets}

(1) For any topological space $X$, $(Irr(X), \subseteq )$ is a dcpo. If  $\mathcal{D}$ is a directed subset  of $Irr(X)$, the supremum of $\mathcal{D}$ in $(Irr(X), \subseteq )$ equals $cl(\bigcup\mathcal{D})$ (the closure of $\bigcup\mathcal{D}$), which is the same as the supremum of $\mathcal{D}$ in the complete lattice  of all closed sets of $X$.

(2) For any $x\in X$, $cl(\{x\})\in Irr(X)$. A $T_0$ space $X$ is called sober if $Irr(X)=\{cl(\{x\}): x\in X\}$, that is, every nonempty irreducible closed set is the closure of a point.

(3) If $(X,\tau)$ and $(Y,\eta)$ are topological spaces such that  the open set lattices ${(\tau,\subseteq)}$ and $(\eta,\subseteq)$ of $X$ and $Y$ are isomorphic, then the posets $Irr(X)$ and $Irr(Y)$ are isomorphic.

\end{remark}

For a $T_0$ space $X$, a sobrification of $X$ is a sober space $Y$ together with a continuous mapping $\eta_X: X\lra Y$, such that for any continuous mapping $f: X\lra Z$ with $Z$ sober, there is a unique continuous mapping $\hat{f}: Y\lra Z$ such that $f=\hat{f}\circ \eta_X$. The sobrification of a $T_0$ space is unique up to homeomorphism.

\begin{remark}
\label{soberfication}
The following facts on sobrifications are well-known.

(1) If $Y$ is a sober space, then $Y$ is a sobrification  of a $T_0$ space $X$  iff  the  closed  set lattice $C(X)$ of $X$  is isomorphic to the closed set lattice $C(Y)$ of $Y$ (Equivalently, the open set lattice of $Y$ is isomorphic to that of $X$).

(2) The set $Irr(X)$ of all nonempty closed irreducible sets of a $T_0$ space $X$ equipped the hull-kernel topology is a sobrification  of $X$, where the mapping $\eta_X: X\lra Irr(X)$ is defined by $\eta_X(x)=cl(\{x\})$ for all $x\in X$.
 The closed sets of the hull-kernel topology consists of all sets of the form $h(A)=\{F\in Irr(X): F\subseteq A\}$ ($A$ is a closed set of $X$).
\end{remark}

A  $T_0$ space will be called  Scott sobrificable if there is a dcpo $P$ such that $\Sigma\!P$ is the sobrification  of $X$. Also for a $T_0$ space $(X, \tau)$, $(X, \tau)$ is homeomorphic to $\Sigma\,P$ for some poset $P$ iff $(X, \tau)$ is homeomorphic to the Scott space $\Sigma(X, \le_{\tau})$.

\begin{lem}\label{scott soberficable}
A $T_0$ space $(X, \tau)$ is Scott sobrificable iff
for any Scott closed set  $\mathcal{F}$ of the dcpo $Irr(X)$,
there is a closed set $A$ of $X$ such that $\mathcal{F}=h(A)$.
\end{lem}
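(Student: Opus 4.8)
The plan is to exploit the concrete description of the sobrification from Remark~\ref{soberfication}(2): the sobrification of $X$ is $Irr(X)$ equipped with the hull-kernel topology, whose closed sets are exactly the sets $h(A)$. Write $\tau_h$ for this topology. Since the sobrification is unique up to homeomorphism, $X$ is Scott sobrificable precisely when $(Irr(X),\tau_h)$ is homeomorphic to $\Sigma P$ for some dcpo $P$. My first step is to identify the specialization order of $(Irr(X),\tau_h)$: the $\tau_h$-closure of a singleton $\{G\}$ is the smallest $h(A)$ containing $G$, namely $h(G)$, so $F\le_{\tau_h}G$ iff $F\subseteq G$. Thus the specialization poset of the sobrification is exactly the dcpo $(Irr(X),\subseteq)$ of Remark~\ref{irreducible sets}(1), and the whole question becomes a comparison of $\tau_h$ with the Scott topology $\sigma(Irr(X))$ of $(Irr(X),\subseteq)$.

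The key reduction is that one of the two relevant inclusions always holds. I would first check $\tau_h\subseteq\sigma(Irr(X))$ directly, by verifying that each $h(A)$ is Scott closed in $(Irr(X),\subseteq)$: it is a lower set for $\subseteq$, and if $\mathcal{D}\subseteq h(A)$ is directed, then its supremum in $Irr(X)$ is $cl(\bigcup\mathcal{D})$ by Remark~\ref{irreducible sets}(1), which is contained in $A$ because $\bigcup\mathcal{D}\subseteq A$ and $A$ is closed; hence the supremum again lies in $h(A)$. Consequently the only possible discrepancy is the reverse inclusion $\sigma(Irr(X))\subseteq\tau_h$, which says that every Scott closed set of $Irr(X)$ is $\tau_h$-closed, i.e. of the form $h(A)$ — precisely the stated condition.

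For the ``if'' direction I would assume the condition and take $P=(Irr(X),\subseteq)$. The inclusion above together with the hypothesis yields $\tau_h=\sigma(Irr(X))$, so $\Sigma P=(Irr(X),\tau_h)$ is the sobrification of $X$, whence $X$ is Scott sobrificable. For the ``only if'' direction, suppose $\Sigma P$ is a sobrification of $X$; by uniqueness there is a homeomorphism $\phi\colon\Sigma P\to(Irr(X),\tau_h)$. Such a $\phi$ is an isomorphism of the specialization posets, $(P,\le)\to(Irr(X),\subseteq)$, since the specialization order of $\Sigma P$ recovers $\le$ and that of $(Irr(X),\tau_h)$ is $\subseteq$. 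Because the Scott topology depends only on the order, $\phi$ also transports $\sigma(P)$ onto $\sigma(Irr(X))$; comparing this with the fact that $\phi$ carries $\sigma(P)$ onto $\tau_h$ forces $\tau_h=\sigma(Irr(X))$, so every Scott closed set of $Irr(X)$ equals some $h(A)$.

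I expect the main subtlety to lie in the ``only if'' direction: one must pass from a bare homeomorphism onto the abstractly characterized sobrification to the genuine topological identity $\tau_h=\sigma(Irr(X))$ on the fixed underlying set $Irr(X)$. This rests on two points that deserve care, namely that a homeomorphism preserves the specialization order and that the Scott topology is completely determined by that order, so that an order isomorphism automatically matches the Scott topologies. The ``if'' direction is comparatively routine, reducing to the direct verification that each $h(A)$ is Scott closed combined with the hypothesis supplying the converse inclusion.
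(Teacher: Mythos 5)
Your proof is correct and follows essentially the same route as the paper's: both observe that every $h(A)$ is Scott closed in $(Irr(X),\subseteq)$, so the hull-kernel topology is contained in the Scott topology, and then reduce Scott sobrificability to the coincidence of the two topologies. Your write-up merely makes explicit a step the paper leaves implicit, namely that a homeomorphism from some $\Sigma P$ onto $(Irr(X),\tau_h)$ is an order isomorphism of specialization posets and hence transports $\sigma(P)$ onto $\sigma(Irr(X))$, forcing $\tau_h=\sigma(Irr(X))$.
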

\begin{proof} Note that $Irr(X)$ equipped with the hull-kernel topology is a sobrification of $X$. Also every $h(A)=\{F\in Irr(X): F\subseteq A\}$ is a Scott closed set of the dcpo $(Irr(X), \subseteq)$, where $A$ is a closed set of $X$. Thus the hull kernel topology on $Irr(X)$ is contained in the Scott topology of $Irr(X)$.
Thus $(X, \tau)$ is Scott sobricable iff  there is a  dcpo $P$ such that $Irr(X)$ is homeomorphic to $\Sigma\!P$.
This is then equivalent to that the hull kernel topology on $Irr(X)$ coincides with the Scott topology on $(Irr(X), \subseteq)$, hence every  Scott closed set of $(Irr(X), \subseteq)$ equals $h(A)$ for some closed set $A$ of $(X, \tau)$.
\end{proof}

A topological space $(X, \tau)$ is called a d-space (or monotone convergence space) if (i) $X$ is $T_0$, (ii) the poset $(X, \le_{\tau})$ is a dcpo, and (iii) for any directed subset $D\subseteq X$, $D$ converges (as a net) to $\bigvee\!D$. If $(X, \tau)$ is a d-space, then every closed set $F$ of $X$ is a Scott closed set of the dcpo $(X, \leq_{\tau})$.

\begin{remark}\label{d-spaces}

(1) Every sober space is a d-space.

(2) Every Scott space of a dcpo is a d-space.

\end{remark}

\begin{lem}\label{sup of directed set of points}
Let $(X, \tau)$ be a d-space.
If $\{x_i: i\in I\}$ is a directed subset of $(X, \le_{\tau})$, then
the supremum $\sup\{cl(\{x_i\}): i\in I\}$ of $\{cl(\{x_i\}): i\in I\}$ in $Irr(X)$ equals
$cl(\{x\})$, where $x=\bigvee\{x_i: i\in I\}$.
\end{lem}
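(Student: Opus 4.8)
The plan is to exploit the canonical order embedding $x\mapsto cl(\{x\})$ of $(X,\le_{\tau})$ into $(Irr(X),\subseteq)$, together with the two defining features of a d-space: that $(X,\le_{\tau})$ is a dcpo, so that $x=\bigvee\{x_i:i\in I\}$ exists, and that the directed net $\{x_i:i\in I\}$ converges to this supremum. The whole argument reduces to showing $cl(D)=cl(\{x\})$, where $D=\{x_i:i\in I\}$, and then recognizing $cl(D)$ as the supremum in question.

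First I would record that $\{cl(\{x_i\}):i\in I\}$ is directed in $Irr(X)$. By the definition of the specialization order, $x_i\le_{\tau}x_j$ is equivalent to $x_i\in cl(\{x_j\})$, i.e.\ to $cl(\{x_i\})\subseteq cl(\{x_j\})$, so the assignment $x\mapsto cl(\{x\})$ is monotone and hence carries the directed set $\{x_i\}$ to a directed family of irreducible closed sets. By Remark~\ref{irreducible sets}(1) its supremum in $Irr(X)$ is the closure of the union $\bigcup_{i}cl(\{x_i\})$, and since each $x_i\in cl(\{x_i\})$ this closure equals $cl(D)$. Thus it remains only to identify $cl(D)$ with $cl(\{x\})$.

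The inclusion $cl(D)\subseteq cl(\{x\})$ is the routine half: each $x_i$ satisfies $x_i\le_{\tau}x$, hence $x_i\in cl(\{x\})$, so $D\subseteq cl(\{x\})$; taking closures and using that $cl(\{x\})$ is already closed gives the inclusion.

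The reverse inclusion $cl(\{x\})\subseteq cl(D)$ is where the d-space hypothesis is essential, and I expect this to be the crux (albeit a short one). It suffices to prove $x\in cl(D)$. By the defining convergence property of a d-space, the directed net $D$ converges to $\bigvee D=x$, so every open neighbourhood $U$ of $x$ contains a tail of the net and in particular meets $D$, giving $U\cap D\neq\emptyset$. Since a point lies in the closure of a set precisely when each of its open neighbourhoods meets that set, this yields $x\in cl(D)$, whence $cl(\{x\})\subseteq cl(D)$. Combining the two inclusions gives $cl(D)=cl(\{x\})$, which is exactly the claimed value of $\sup\{cl(\{x_i\}):i\in I\}$.
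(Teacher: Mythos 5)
Your proof is correct. The paper actually states this lemma without proof, so there is nothing to compare against; your argument is the natural one and fills that gap cleanly: monotonicity of $x\mapsto cl(\{x\})$ gives directedness, Remark~\ref{irreducible sets}(1) identifies the supremum as $cl(D)$, the inclusion $cl(D)\subseteq cl(\{x\})$ follows from $x_i\le_{\tau}x$, and the reverse inclusion is exactly where the d-space convergence condition is used to get $x\in cl(D)$. All steps check out.
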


\section{Main results}
In this section, we  establish some classes of SCL-faithful dcpos, using irreducible sets, quasicontinuous elements and M property, respectively.

A $T_0$ space is called  bounded-sober  if every nonempty upper bounded (with respect to the specialization order on $X$) closed irreducible subset of the space is the closure of a point \cite{zandf-2010}. Every sober space is bounded-sober, the converse implication is not true.

\vskip 0.5cm
If $X$ is a $T_0$ space such that every  irreducible closed {\sl proper } subset is the closure of an element, then  $X$ is bounded-sober.
\vskip 0.5cm
In the following, a dcpo whose Scott space is sober (bounded-sober) will be simply called a sober (bounded-sober) dcpo.

\begin{lem}\label{non-sober bounded sober}
For a bounded-sober dcpo $P$, $\Sigma\!P$ is Scott sobrificable if and only if $P$ is sober.
\end{lem}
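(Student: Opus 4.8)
The plan is to handle the two implications separately, the reverse being immediate and the forward one carrying all the content. If $P$ is sober then $\Sigma P$ is a sober space, hence it is (up to homeomorphism) its own sobrification; taking the dcpo in the definition of Scott sobrificability to be $P$ itself then witnesses that $\Sigma P$ is Scott sobrificable. For the nontrivial direction I would argue by contraposition: assuming $P$ is bounded-sober but \emph{not} sober, I will exhibit a Scott closed subset of the dcpo $Irr(\Sigma P)$ that is not of the form $h(A)$, so that $\Sigma P$ fails the criterion of Lemma~\ref{scott soberficable}.

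Two preliminary observations set up the argument. First, the specialization order of $\Sigma P$ is the original order of $P$, so $cl(\{x\})=\downarrow x$ for every $x$, and the closed sets of $\Sigma P$ are exactly the members of $C_{\sigma}(P)$. Second, since $P$ is bounded-sober but not sober, every nonempty \emph{bounded} irreducible closed set is a principal ideal $\downarrow x$, while there is at least one nonempty irreducible closed set $F_0$ that is not of this form (necessarily $F_0$ is unbounded, since otherwise bounded-soberness would force $F_0=\downarrow x$).

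The candidate witness is $\mathcal{F}_0=\{\downarrow x : x\in P\}$, the image of $P$ under the sobrification map. I would verify that $\mathcal{F}_0$ is Scott closed in $Irr(\Sigma P)$ in two steps. It is a lower set: if $G\in Irr(\Sigma P)$ with $G\subseteq \downarrow x$, then $G$ is bounded above by $x$, so bounded-soberness yields $G=\downarrow y\in\mathcal{F}_0$ — this is precisely where the bounded-sober hypothesis is consumed. It is closed under directed suprema: a directed family $\{\downarrow x_i\}$ in $\mathcal{F}_0$ corresponds to a directed family $\{x_i\}$ in $P$, whose supremum $x$ exists because $P$ is a dcpo, and Lemma~\ref{sup of directed set of points} identifies $\sup_i \downarrow x_i$ with $\downarrow x\in\mathcal{F}_0$.

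To finish, suppose toward a contradiction that $\Sigma P$ is Scott sobrificable. By Lemma~\ref{scott soberficable} there is a closed set $A\in C_{\sigma}(P)$ with $\mathcal{F}_0=h(A)=\{F\in Irr(\Sigma P): F\subseteq A\}$. Then $\downarrow x\subseteq A$ for every $x$, so $x\in A$ for all $x$, forcing $A=P$; hence $h(A)=h(P)=Irr(\Sigma P)$, which contains $F_0\notin\mathcal{F}_0$ — a contradiction. Thus $\Sigma P$ is not Scott sobrificable, completing the contrapositive. I expect the main obstacle to be the verification that $\mathcal{F}_0$ is a lower set in $Irr(\Sigma P)$: this is the single step that genuinely relies on bounded-soberness together with the fact that an irreducible closed subset of the principal ideal $\downarrow x$ is still irreducible and closed in all of $\Sigma P$; everything else reduces to bookkeeping with the two cited lemmas.
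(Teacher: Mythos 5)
Your proof is correct and follows essentially the same route as the paper: both arguments refute Scott sobrificability via Lemma~\ref{scott soberficable} by exhibiting a Scott closed family of point-closures in $Irr(\Sigma P)$ that cannot equal any $h(A)$, with bounded-sobriety consumed exactly in the lower-set verification and the directed-sup lemma handling closure under suprema. The only (immaterial) difference is that your witness is the full image $\{\downarrow x: x\in P\}$, whereas the paper localizes to $\downarrow_{Irr(\Sigma P)}\{cl(\{x\}): x\in F\}$ for one particular non-principal irreducible closed set $F$.
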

\begin{proof} We only need to check  that  if $\Sigma\!P$ is not sober, then it is not  Scott sobrificable.

Since $\Sigma\! P$ is not sober, there is a nonempty irreducible closed set $F$ such that $F$ is not the closure of any point.
 By that $\Sigma\!P$ is bounded-sober, one can verify that the set $\mathcal{F}=\downarrow_{Irr(\Sigma\,P)}\{cl(\{x\}): x\in F\}$ is a Scott closed set of $Irr(\Sigma\,P)$. But  any closed set $B$ of $\Sigma\,P$ containing all $cl(\{x\}) (x\in F)$ must contain $F$, thus $h(B)\not=\mathcal{F}$.  By Lemma \ref{scott soberficable},  $\Sigma\,P$ is not Scott sobrificable.
\end{proof}

In the following, we shall write $P\cong Q$ if the two posets $P$ and $Q$ are isomorphic.
\begin{thm}\label{tm-sober-bounded sober}
Let  $P$ be a sober dcpo. For any bounded-sober dcpo $Q$, if $C_{\sigma}(P)\cong C_{\sigma}(Q)$ then  $P \cong Q$.
\end{thm}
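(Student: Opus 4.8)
The plan is to reduce everything to the fact that a \emph{sober} dcpo is completely recovered, up to isomorphism, from the poset of irreducible closed sets of its Scott space, and then to force $Q$ to be sober as well. First I would translate the hypothesis: since the Scott-closed sets of a dcpo are exactly the closed sets of its Scott space, the isomorphism $C_{\sigma}(P)\cong C_{\sigma}(Q)$ is precisely an isomorphism $C(\Sigma P)\cong C(\Sigma Q)$ of closed-set lattices. By Remark~\ref{irreducible sets}(3) this already yields an order isomorphism $Irr(\Sigma P)\cong Irr(\Sigma Q)$ of the associated dcpos of irreducible closed sets.

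The key step is to show that $Q$ must be sober. Because $P$ is a sober dcpo, $\Sigma P$ is sober, and $C(\Sigma Q)\cong C(\Sigma P)$; hence by Remark~\ref{soberfication}(1) the sober space $\Sigma P$ is a sobrification of $\Sigma Q$. This exhibits a dcpo (namely $P$) whose Scott space is the sobrification of $\Sigma Q$, so by definition $\Sigma Q$ is Scott sobrificable. Now I would invoke the bounded-sober hypothesis on $Q$: Lemma~\ref{non-sober bounded sober} states that for a bounded-sober dcpo, Scott sobrificability is equivalent to soberness. Therefore $Q$ is sober.

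With both $P$ and $Q$ sober, I would conclude by the standard identification. In any $T_0$ space the map $x\mapsto cl(\{x\})$ is an order embedding of $(X,\le_\tau)$ into $Irr(X)$, and soberness (Remark~\ref{irreducible sets}(2)) makes it a bijection onto $Irr(X)$, hence an order isomorphism; applied to the dcpos $P$ and $Q$ this gives $P\cong Irr(\Sigma P)$ and $Q\cong Irr(\Sigma Q)$. Combining these with the isomorphism $Irr(\Sigma P)\cong Irr(\Sigma Q)$ obtained in the first step yields $P\cong Q$.

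I expect the main obstacle to be the middle step, namely extracting soberness of $Q$. The lattice isomorphism only transports information about irreducible closed sets up to order isomorphism, and a priori $Q$ could be a non-sober dcpo sharing the same Scott-closed-set lattice as the sober $P$; the bounded-sober assumption is exactly what rules this out, through the Scott-sobrificability characterization of Lemma~\ref{non-sober bounded sober}. Verifying carefully that $\Sigma P$ genuinely serves as the sobrification of $\Sigma Q$ (and not merely that their closed-set lattices coincide) is the crux, after which the remaining identifications are routine.
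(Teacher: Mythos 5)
Your proposal is correct and follows essentially the same route as the paper: use Remark~\ref{soberfication}(1) to see that $\Sigma P$ is a sobrification of $\Sigma Q$, deduce soberness of $Q$ from Lemma~\ref{non-sober bounded sober} via Scott sobrificability, and then identify $P$ and $Q$ with their posets of irreducible closed sets. The only cosmetic difference is that the paper concludes by citing uniqueness of sobrification to get a homeomorphism $\Sigma P\cong\Sigma Q$, whereas you pass explicitly through $Irr(\Sigma P)\cong Irr(\Sigma Q)$; these are equivalent finishes.
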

\begin{proof}
Let $Q$ be a bounded-sober dcpo such that $C_{\sigma}(P)\cong C_{\sigma}(Q)$. Then $\Sigma\! P$ is a sobrification  of $\Sigma\,Q$. By Lemma \ref{non-sober bounded sober}, $\Sigma\! Q$ is sober, therefore $\Sigma\!P$ and $\Sigma\! Q$ are homeomorphic, which implies $P\cong Q$.
\end{proof}

\begin{definition} An element $a$ of a poset $P$ is called down-linear if the subposet $\dr a=\{x\in P: x\le a\}$ is a chain (for any $x_1, x_2\in \dr a$, it holds that either $x_1\le x_2$ or $x_2\le x_1$).
\end{definition}

The image of a down-linear element under an order isomorphism is clearly down-linear.

\begin{lem}\label{irr set with liear down set}
Let $X$ be a d-space. If  $F\in Irr(X)$ is a down-linear element of the poset $Irr(X)$, then there exists a unique $x\in X$ such that $F=cl(\{x\})$.
\end{lem}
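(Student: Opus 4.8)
The plan is to get uniqueness instantly from the $T_0$ axiom and to prove existence by exhibiting the generating point as a supremum forced to live inside $F$. For uniqueness, if $cl(\{x\})=F=cl(\{y\})$ then $x\le_{\tau}y$ and $y\le_{\tau}x$, so $x=y$ because $X$ is $T_0$. So the whole content is in producing one such $x$.

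The first, and decisive, step is to translate the down-linearity hypothesis (a statement about the lattice $Irr(X)$) into a statement about the points of $F$ inside $X$. For every $x\in F$ we have $cl(\{x\})\in Irr(X)$ by Remark \ref{irreducible sets}(2), and $cl(\{x\})\subseteq cl(F)=F$; hence every such closure lies in the set $\dr F=\{G\in Irr(X):G\subseteq F\}$, which by assumption is a chain. Consequently, for any $x,y\in F$ the irreducible closed sets $cl(\{x\})$ and $cl(\{y\})$ are comparable under inclusion, and by the definition of the specialization order this says precisely that $x$ and $y$ are comparable in $\le_{\tau}$. Thus $(F,\le_{\tau})$ is a chain.

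The second step invokes the d-space structure. Since $X$ is a d-space, $(X,\le_{\tau})$ is a dcpo and the closed set $F$ is Scott closed in $(X,\le_{\tau})$, as recorded in the excerpt. A nonempty chain is directed, so $\bigvee F$ exists in $X$; and because $F$ is Scott closed while $F$ is a directed subset of itself, $x:=\bigvee F$ must belong to $F$. Being an upper bound of $F$ that lies in $F$, this $x$ is the greatest element of the chain $F$. I would then check $F=cl(\{x\})$ directly: the inclusion $cl(\{x\})\subseteq F$ holds since $x\in F$ and $F$ is closed, while conversely every $y\in F$ satisfies $y\le_{\tau}x$, i.e. $y\in cl(\{x\})$, giving $F\subseteq cl(\{x\})$.

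The only genuinely load-bearing move is the translation in the second paragraph, from ``$F$ is down-linear in $Irr(X)$'' to ``$(F,\le_{\tau})$ is a chain''; everything after it is a routine application of the dcpo/Scott-closed structure of a d-space and of the elementary dictionary between $\le_{\tau}$ and point closures. I therefore do not expect a serious obstacle once that reformulation is secured, though one should be careful to note that the directedness of $F$ and the existence of $\bigvee F$ rely on $F$ being nonempty (which is part of membership in $Irr(X)$) and on $(X,\le_{\tau})$ being a dcpo.
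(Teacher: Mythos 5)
Your proposal is correct and follows essentially the same route as the paper: translate down-linearity of $F$ in $Irr(X)$ into $(F,\le_{\tau})$ being a chain via the closures of points of $F$, take the supremum of that chain in the dcpo $(X,\le_{\tau})$, use Scott-closedness of $F$ to keep the supremum inside $F$, and conclude $F=cl(\{x\})$. Your write-up merely makes explicit the steps (uniqueness from $T_0$, directedness of a nonempty chain, $F$ Scott closed in a d-space) that the paper's terse proof leaves implicit.
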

\begin{proof}
First, the set $\{cl(\{x\}): x\in F\}$ is a subset of $\dr F$ in $Irr(X)$, so it is a chain. Thus $\{x: x\in F\}$ is a chain of $(X, \le_{\tau})$. Let $x=\sup\{x: x\in F\}$. Then noticing that $F$ is closed, we have  $cl(\{x\})=F$.
\end{proof}

In the following, for a dcpo $P$, we shall use $Irr_{\sigma}(P)$ to denote the dcpo of all nonempty irreducible Scott closed subsets of $P$.
Without specification, irreducible sets of a poset mean the irreducible sets with respect to the Scott topology.

\begin{thm}\label{main theorem}
Let $P$ be a dcpo  satisfying the following conditions:

 {\em (DL-sup)} for any nonempty irreducible Scott closed \it{proper} set $F$, $F$  is either a down-linear element of $Irr_{\sigma}(P)$  or it is the supremum of a directed set of down-linear irreducible closed sets.\\
Then $P$ is SCL-faithful.
\end{thm}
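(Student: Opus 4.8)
The plan is to recast the whole problem in terms of the dcpo of irreducible Scott-closed sets. Suppose $Q$ is any dcpo with $C_{\sigma}(P)\cong C_{\sigma}(Q)$. Since $C_{\sigma}(P)$ and $C_{\sigma}(Q)$ are precisely the closed-set lattices of $\Sigma P$ and $\Sigma Q$, Remark \ref{irreducible sets}(3) supplies an order isomorphism $\phi\colon Irr_{\sigma}(P)\to Irr_{\sigma}(Q)$. The entire argument will then consist of recovering $P$ and $Q$, in a purely order-theoretic way, from $Irr_{\sigma}(P)$ and $Irr_{\sigma}(Q)$, so that $\phi$ carries one reconstruction onto the other. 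The natural reconstruction map is $\eta_P\colon x\mapsto cl(\{x\})=\dr x$, which is an order embedding of $P$ into $Irr_{\sigma}(P)$.

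The first key step is to show that (DL-sup) forces every \emph{proper} nonempty irreducible Scott-closed set of $P$ to be a point-closure. If such an $F$ is down-linear, Lemma \ref{irr set with liear down set} applied to the d-space $\Sigma P$ gives $F=cl(\{x\})$ outright; if instead $F=\sup_i F_i$ with each $F_i$ down-linear, then each $F_i=cl(\{x_i\})$, the family $\{x_i\}$ is directed because $\eta_P$ is an order embedding, its supremum $\bigvee_i x_i$ exists since $P$ is a dcpo, and Lemma \ref{sup of directed set of points} identifies $F=\sup_i cl(\{x_i\})$ with $cl(\{\bigvee_i x_i\})$. Hence every proper irreducible closed subset of $\Sigma P$ is a point-closure, so by the observation stated just before Lemma \ref{non-sober bounded sober}, $\Sigma P$ is bounded-sober. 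The same conclusion holds for $Q$: down-linearity, directedness, suprema, and the property of being a proper (equivalently, non-top) element are all preserved by $\phi$, so $Q$ again satisfies (DL-sup) and is therefore bounded-sober as well.

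Next I would split on whether $\Sigma P$ is sober. If $P$ is sober, then since $Q$ is bounded-sober, Theorem \ref{tm-sober-bounded sober} gives $P\cong Q$ immediately. If $P$ is not sober, then by the previous step the only irreducible closed set that fails to be a point-closure is $P$ itself, so $P$ is irreducible, has no greatest element, and $\eta_P$ restricts to an order isomorphism $P\cong Irr_{\sigma}(P)\setminus\{\top\}$, where $\top=P$ is the top of $Irr_{\sigma}(P)$. Should $Q$ happen to be sober, Theorem \ref{tm-sober-bounded sober} with the roles reversed again yields $P\cong Q$; otherwise the identical analysis gives $Q\cong Irr_{\sigma}(Q)\setminus\{\top\}$, and since $\phi$ preserves the top it restricts to an isomorphism of the two complements, whence $P\cong Irr_{\sigma}(P)\setminus\{\top\}\cong Irr_{\sigma}(Q)\setminus\{\top\}\cong Q$.

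The hard part will be the bookkeeping around the top element. Because (DL-sup) deliberately excludes the improper set $P$, I must verify that ``proper irreducible closed set'' corresponds order-theoretically to ``non-top element of $Irr_{\sigma}(P)$'', using that $Irr_{\sigma}(P)$ has a top precisely when $P$ is irreducible and that this top is then $P$ itself; only this makes the transfer of (DL-sup) to $Q$ legitimate and guarantees that $P$ and $Q$ land in matching cases. The remaining verifications—that $\eta_P$ is an order embedding, that directed suprema of point-closures are computed by Lemma \ref{sup of directed set of points}, and that $\phi$ preserves tops—are routine.
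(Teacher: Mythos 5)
Your proposal is correct and follows essentially the same route as the paper's own proof: first show that (DL-sup) makes every proper nonempty irreducible Scott-closed set a point-closure (hence $\Sigma P$, and by transfer $\Sigma Q$, is bounded-sober), then split on sobriety, invoking Theorem \ref{tm-sober-bounded sober} in the sober case and the isomorphism $P\cong Irr_{\sigma}(P)\setminus\{P\}\cong Irr_{\sigma}(Q)\setminus\{Q\}\cong Q$ in the non-sober case. You are somewhat more explicit than the paper about why (DL-sup) transfers to $Q$ and about the identification of ``proper'' with ``non-top'', but the argument is the same.
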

\begin{proof}
Let dcpo $P$ satisfy the above condition (DL-sup) and $Q$ be a dcpo such that $C_{\sigma}(P)\cong C_{\sigma}(Q)$.

(1) Let $F\in Irr_{\sigma}(P)$ and $F\not=P$. If $F$ is down-linear,  then by Lemma \ref{irr set with liear down set}, $F$ is the closure of a unique point.
If $F$ is the supremum of a directed set of down-linear irreducible closed sets, then by Lemma \ref{irr set with liear down set},  $F$ can be represented as the directed supremum of $cl(\{x_i\}) (i\in J)$. Thus, $\{x_i: i\in J\}$ is a directed set of  $P$. Let $x=\sup\{x_i: i\in J\}$. Then noticing that $F$ is closed, we have that $cl(\{x\})=F$ is also a closure of a point.

(2) Since $C_{\sigma}(P)\cong C_{\sigma}(Q)$, $Q$ also satisfies condition  (DL-sup). As the proof of  (1) only make use of condition (DL-sup), so every nonempty closed irreducible proper subset of $\Sigma\,Q$ is  the closure of a point.

By the definition of the bounded-sobriety, we see that (1) and (2) imply that $\Sigma\,P$ and $\Sigma\,Q$ are all bounded-sober.

(3)  If either $\Sigma\!P$ or $\Sigma\!Q$ is sober, then by Theorem \ref{tm-sober-bounded sober}, $P\cong Q$.  If neither  $\Sigma\!P$ nor $\Sigma\!Q$  is sober, then $P$ and $Q$ are irreducible sets  and are not the closure of any point.  Thus $Q\cong\{cl(\{y\}): y\in Q\}\cong Irr_{\sigma}(Q)-\{Q\}\cong Irr_{\sigma}(P)-\{P\}\cong\{cl(\{x\}): x\in P\}\cong P$, as desired.
\end{proof}

\begin{example}\label{ex-JS-non-sober}
In \cite{johnstone-81}, Johnstone constructed the first non-sober dcpo  as $X=\mathbb{N}\times(\mathbb{N}\cup \{\infty\})$ with partial order defined by
$$ (m, n)\le (m', n') \Leftrightarrow \mbox{either } m=m' \mbox{ and } n\le n'\\
\mbox{or } n'=\infty \mbox{ and } n\le m'.$$
Then

(a) $(X, \leq)$ is a dcpo, $X$ is irreducible and $X\not=cl(\{x\})$ for any $x\in X$.

(b) If $F$ is a proper irreducible Scott closed set of $X$, then $F=\dr\!(m, n)$ for some $(m, n)\in X$.

(c) If $m\not=\infty$, $\dr\!(m, n)$ is a down-linear element of $Irr_{\sigma}(X)$. If $m=\infty$, then $\dr (m, n)$ is the supremum of the
chain $\{\dr\!(m, k): k \not=\infty\}$ whose members are down-linear.

Hence by Theorem \ref{main theorem}, we deduce that  dcpo $X=\mathbb{N}\times(\mathbb{N}\cup \{\infty\})$ is SCL-faithful.

Thus an SCL-faithful dcpo need not be sober.

\end{example}

Next, we give another  class of SCL-faithful dcpos.

\begin{remark} (cf. \cite{lawxu})
Let $A$ be a nonempty Scott closed set of a dcpo $P$. Then

(i) $A$ is a dcpo.

(ii) For any subset $B\subseteq A$, $B$ is a Scott closed set of dcpo $A$ iff it is a Scott closed set of $P$. Thus $C_{\sigma}(A)=\dr_{C_{\sigma}(P)}A=\{B\in C_{\sigma}(P): B\subseteq A\}$.

\end{remark}

A finite subset $F$ of a dcpo $P$ is way-below an element $a\in P$, denoted by $F\ll a $, if for any directed subset $D\subseteq P$, $a\le \bigvee D$ implies $D\cap\!\ur F\not=\emptyset$. A dcpo $P$ is quasicontinuous if for any $x\in P$, the family
$$ fin(x)=\{F: F \mbox{ is finite and } F\ll x\}$$
is a directed family (for any $F_1, F_2\in fin(x)$ there is $F\in fin(x)$ such that $F\subseteq \ur F_1\cap {\ur F_2}$)  and
for any $y\not\leq x $ there is $F\in fin(x)$ satisfying $y\not\in \ur F$ (see Definition III-3.2 of \cite{comp}). Every continuous dcpo is quasicontinuous.

Every quasicontinuous dcpo is sober (Proposition III-3.7 of \cite{comp}). A dcpo $P$ is quasicontinuous iff the Scott open set lattice of $P$ is hypercontinuous (Theorem VII-3.9 of \cite{comp}). From this and Remark 4, we have the following.

\begin{lem}\label{quasicontinuous is scl-faithul}
Every quasicontinuous dcpo is SCL-faithful.
\end{lem}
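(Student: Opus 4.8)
The plan is to show that quasicontinuity is already encoded in the Scott closed set lattice, so that the hypothesis $C_{\sigma}(P)\cong C_{\sigma}(Q)$ by itself forces $Q$ to be quasicontinuous, and then to feed this into the already proved Theorem \ref{tm-sober-bounded sober}.

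First I would pass from closed sets to open sets. The complementation map $A\mapsto P\setminus A$ is an order-reversing bijection between $C_{\sigma}(P)$ and the lattice $\sigma(P)$ of Scott open sets, so $\sigma(P)$ is isomorphic to the order dual $C_{\sigma}(P)^{\op}$. Consequently any lattice isomorphism $C_{\sigma}(P)\cong C_{\sigma}(Q)$ dualizes to a lattice isomorphism $\sigma(P)\cong\sigma(Q)$ of the Scott open set lattices. Now I would invoke the cited characterization (Theorem VII-3.9 of \cite{comp}): a dcpo is quasicontinuous exactly when its Scott open set lattice is hypercontinuous. Since $P$ is quasicontinuous, $\sigma(P)$ is hypercontinuous; hypercontinuity is a purely lattice-theoretic property and hence is preserved by the isomorphism $\sigma(P)\cong\sigma(Q)$, so $\sigma(Q)$ is hypercontinuous and $Q$ is quasicontinuous as well. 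This transfer of quasicontinuity across the isomorphism is the only step that needs genuine care; the point to confirm is precisely that the characterizing property is intrinsic to the (open) lattice rather than dependent on the particular dcpo presentation. Everything after it is a soft consequence of earlier results.

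Finally, by Proposition III-3.7 of \cite{comp} every quasicontinuous dcpo is sober, so both $\Sigma\!P$ and $\Sigma\!Q$ are sober, and in particular $Q$ is bounded-sober, since every sober space is bounded-sober. Applying Theorem \ref{tm-sober-bounded sober} to the sober dcpo $P$ and the bounded-sober dcpo $Q$ with $C_{\sigma}(P)\cong C_{\sigma}(Q)$ then yields $P\cong Q$. As $Q$ was an arbitrary dcpo with $C_{\sigma}(P)\cong C_{\sigma}(Q)$, this establishes that $P$ is SCL-faithful, completing the proof.
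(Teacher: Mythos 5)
Your proof is correct and follows essentially the same route as the paper: the paper's (very terse) argument likewise rests on transferring hypercontinuity of the Scott open set lattice across the isomorphism to conclude that $Q$ is quasicontinuous, hence sober, and then using sobriety of both Scott spaces to deduce $P\cong Q$. You have simply made explicit the details the paper leaves implicit, closing the argument via Theorem \ref{tm-sober-bounded sober}.
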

\smallskip

An element $x$ of a dcpo $P$ is called a quasicontinuous element if the sub-dcpo $\dr\!\! x$ is a quasicontinuous dcpo.

\begin{thm}\label{quasicontinuous elements}
 Let $P$ be a dcpo. Then $P$ is SCL-faithful if it satisfies the following two conditions:

 (1) $\Sigma P$ is bounded sober;

 (2) every element of $P$ is the supremum of a directed set of quasicontinuous elements.
\end{thm}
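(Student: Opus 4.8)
The plan is to mirror the proof of Theorem~\ref{main theorem}, with the r\^ole of down-linear elements played by irreducible Scott-closed sets that happen to be quasicontinuous dcpos. The replacement for Lemma~\ref{irr set with liear down set} is the following quasicontinuous analogue: \emph{if $X$ is a d-space and $A\in Irr(X)$ is such that the sub-dcpo $A$ is quasicontinuous, then $A=cl(\{x\})$ for a unique $x$}. This is clean: a quasicontinuous dcpo is sober (Proposition~III-3.7 of \cite{comp}), so $\Sigma A$ is sober; since $A$ is Scott closed, its subspace topology is the Scott topology of the dcpo $A$ (by the basic properties of Scott-closed subsets recalled above, cf.\ \cite{lawxu}), and $\Sigma A$ is itself an irreducible space. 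A sober irreducible space has a generic point, so $A$ has a greatest element $x$, whence $A=\da x=cl(\{x\})$.

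Next I would recast the hypotheses lattice-theoretically so that they transfer across an isomorphism $\varphi\colon C_{\sigma}(P)\cong C_{\sigma}(Q)$. Call a Scott-closed set $A$ \emph{quasicontinuous} if the dcpo $A$ is quasicontinuous. The principal ideal $\da_{C_{\sigma}(P)}A$ equals $C_{\sigma}(A)$, and by Theorem~VII-3.9 of \cite{comp} (a dcpo is quasicontinuous iff its open-set lattice is hypercontinuous) the property ``$A$ is quasicontinuous'' depends only on this ideal; hence it is preserved by $\varphi$. Irreducibility and directed joins are likewise lattice notions (Remark~\ref{irreducible sets}(3)). Consider the condition (QC-sup): \emph{every proper nonempty irreducible Scott-closed set is a directed join of quasicontinuous irreducible Scott-closed sets}. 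By the previous paragraph (QC-sup) transfers across $\varphi$, and it forces every proper irreducible Scott-closed set to be the closure of a point: if $A=\bigvee^{\uparrow}A_i$ with each $A_i$ quasicontinuous and irreducible, then each $A_i=cl(\{x_i\})$ by the quasicontinuous analogue, and the directed join of point closures is a point closure by Lemma~\ref{sup of directed set of points}, so $A=cl(\{x\})$. Thus, once (QC-sup) holds for $P$ (equivalently for $Q$), both $\Sigma P$ and $\Sigma Q$ are bounded-sober, by the displayed implication preceding Lemma~\ref{non-sober bounded sober}.

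It remains to derive (QC-sup) for $P$ from hypotheses (1) and (2), and this is the step where I expect the difficulty to concentrate. For a proper irreducible Scott-closed set $A$, condition (2) makes every $x\in A$ a directed supremum of quasicontinuous elements, all lying in $A$ since $A$ is a lower set; hence $A=cl\bigl(\bigcup_{u}\da u\bigr)$, where $u$ ranges over the quasicontinuous elements of $A$, and each $\da u$ is a quasicontinuous irreducible Scott-closed set. Point closures are therefore already of the required form: if $A=cl(\{x\})$ with $x=\bigvee^{\uparrow}u_i$, then $A=\bigvee^{\uparrow}\da u_i$ by Lemma~\ref{sup of directed set of points}. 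The real work is to show that the family $\{\da u : u\in A\ \text{quasicontinuous}\}$ is \emph{directed} (equivalently, that the quasicontinuous elements of $A$ form a directed set); for then $A=\bigvee^{\uparrow}_u\da u$ is a directed join as (QC-sup) demands, and in fact $A=\da\bigl(\bigvee_u u\bigr)$ is a point closure. Here bounded-soberness (1) disposes at once of the case in which $A$ is upper bounded, so the task is to rule out \emph{unbounded} proper irreducible closed sets; this is the main obstacle. I would attack it by a maximality/limit argument: given quasicontinuous $u,v\in A$, use the irreducibility of $A$ together with the compact neighbourhood bases $\intr(\ua G)$ available inside the sober dcpos $\da u,\da v$ to produce a point of $A$ lying above both, and then assemble these into a single supremum. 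The delicate point is controlling Scott-open sets of $P$ near a quasicontinuous element \emph{from below}, since quasicontinuity is only available inside $\da u$ and says nothing about approximating directed sets that approach $u$ from outside $\da u$.

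Finally I would conclude exactly as in the last step of Theorem~\ref{main theorem}. With (QC-sup) established for $P$ it transfers to $Q$, and the quasicontinuous analogue together with Lemma~\ref{sup of directed set of points} shows that every proper nonempty irreducible Scott-closed set of $P$ and of $Q$ is the closure of a point; in particular $\Sigma P$ and $\Sigma Q$ are both bounded-sober. If either $\Sigma P$ or $\Sigma Q$ is sober, Theorem~\ref{tm-sober-bounded sober} yields $P\cong Q$. If neither is sober, then in each case the only irreducible Scott-closed set that fails to be a point closure must be the whole space, so $P$ and $Q$ are themselves irreducible and are not the closure of any point; hence $P$ is the greatest element of $Irr_{\sigma}(P)$ and $Irr_{\sigma}(P)\setminus\{P\}=\{cl(\{x\}):x\in P\}\cong P$, and similarly for $Q$. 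Since $C_{\sigma}(P)\cong C_{\sigma}(Q)$ gives $Irr_{\sigma}(P)\cong Irr_{\sigma}(Q)$ (Remark~\ref{irreducible sets}(3)) matching greatest elements, restricting this isomorphism gives $P\cong Q$, as required.
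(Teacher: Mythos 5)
Your proposal has a genuine gap, and you correctly locate it yourself: everything hinges on showing that for a proper irreducible Scott-closed set $A$ the family $\{\da u : u\in A \text{ quasicontinuous}\}$ is directed (your condition (QC-sup)), and you only sketch a hoped-for ``maximality/limit argument'' for this. The problem is worse than a missing argument: (QC-sup) is actually \emph{false} under hypotheses (1) and (2). Take $P$ to be the disjoint (order) sum of Johnstone's dcpo $X$ and a single incomparable point $t$. Every element of $P$ is still a directed supremum of down-linear (hence quasicontinuous) elements, and $P$ is bounded-sober, since every upper-bounded irreducible Scott-closed set lies in some $\da x$ with $x\in X$ or in $\{t\}$ and Johnstone's dcpo is bounded-sober. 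But $X$ itself is now a \emph{proper}, unbounded, irreducible Scott-closed subset of $P$ that is not a point closure; by your own quasicontinuous analogue of Lemma \ref{irr set with liear down set} together with Lemma \ref{sup of directed set of points}, any directed join of quasicontinuous irreducible closed sets is a point closure, so $X$ cannot be one. Hence the strategy of reducing to Theorem \ref{main theorem}'s trichotomy (``every proper irreducible closed set is a point closure, so only the whole space can fail'') cannot succeed: hypotheses (1) and (2) simply do not force that strong form of bounded sobriety.

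The paper's proof takes a different and essentially unavoidable route: it constructs an explicit order isomorphism $f\colon P\to Q$ directly from the lattice isomorphism $F$. For a quasicontinuous element $x$, the principal ideal $\da_{C_\sigma(P)}(\da x)=C_\sigma(\da x)$ is carried by $F$ to $C_\sigma(F(\da x))$; since quasicontinuous dcpos are SCL-faithful (Lemma \ref{quasicontinuous is scl-faithul}, via hypercontinuity of the open-set lattice), $F(\da x)\cong\da x$ has a top element $f(x)$. One then extends $f$ to all of $P$ by directed suprema using hypothesis (2), checks it is an order embedding, and uses bounded sobriety only once — to show $f(P)$ is a lower set of $Q$ (a point $y\le f(x)$ gives $F^{-1}(\da y)$ an irreducible closed set inside $\da x$, hence a point closure $\da x'$, so $y=f(x')$) — after which $f(P)$ is Scott closed and a join computation forces $f(P)=Q$. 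Your first two ingredients (the generic-point lemma for quasicontinuous irreducible closed sets, and the lattice-invariance of quasicontinuity via Theorem VII-3.9 of \cite{comp}) are sound and are implicitly the same facts the paper uses, but they need to be deployed to build a map point-by-point rather than to classify all irreducible closed sets.
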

\begin{proof}
Assume that $P$ is a dcpo satisfying the two conditions. Let $Q$ be a dcpo and $F: C_{\sigma}(P)\lra C_{\sigma}(Q)$ be an isomorphism. Then $F$ restricts to an isomorphism $F: Irr_{\sigma}(P)\lra Irr_{\sigma}(Q)$.

(1) Let $x\in P$ be a quasicontinuous element. Then $F(\dr\!x)$ is in $C_{\sigma}(Q)$ and ${\dr_{C_{\sigma}(P)}(\dr\!x)}$ $=\{B\in C_{\sigma}(P): B\subseteq \dr x\}$ is isomorphic via $F$ to $\dr_{C_{\sigma}(Q)}F(\dr\!x)=\{E\in C_{\sigma}(Q): E\subseteq F(\dr x)\}=C_{\sigma}(F(\dr\!x))$ (all Scott closed sets of $F(\dr\!x)$). Since $\dr\!x$ is quasicontinuous, it is SCL-faithful. Hence $\dr\!x$ is isomorphic to $F(\dr x)$, implying that there is a largest element in $F(\dr\!x)$, denoted by $f(x)$. It is easily observable that the mapping $f$ is well defined on the set of quasicontinuous elements of $P$, and for any two quasicontinuous elements $x_1, x_2\in P$, $f(x_1)\le f(x_2)$ iff $x_1\le x_2$.

(2) If $x\in P$ is the supremum of a directed set $\{x_i: i\in I\}$ of quasicontinuous elements $x_i$, then $F(\downarrow\!x)= sup_{Irr_{\sigma}(Q)}\{F(\downarrow\!x_i): i\in I\}=sup_{Irr_{\sigma}(Q)}\{\downarrow\!f(x_i): i\in I\}=\downarrow\!y_x$, where $y_x=sup_{Q}\{f(x_i): i\in I\}$ and $f(x_i)$ is the element in $Q$ defined for quasicontinuous elements $x_i$ in (1). Let $f(x)=y_x$ again.

Thus we have an monotone mapping $f: P\longrightarrow Q$. Following that $F$ is an isomorphism, we have that $f(x_1)\ge f(x_2)$ iff $x_1\ge x_2$.

It remains  to show that $f$ is surjective.

(3) If $y\in\downarrow\!f(P)$, then $\downarrow\!y\subseteq F(\downarrow\!x)$ for some $x\in P$. Since $F$ restricts to an isomorphism between the dcpos $Irr_{\sigma}(P)$ and $Irr_{\sigma}(Q)$, there is $H\in Irr_{\sigma}(P)$ such that  $H\subseteq \downarrow\!x$ and  $F(H)=\downarrow\!y$.  But $P$ is bounded-sober, so $H=\downarrow\!x'$ for some $x'\in P$. It follows that $y=f(x')$, implying $y\in f(P)$. Therefore
 $f(P)$ is a down set of $Q$. Also clearly $f(P)$ is closed under sups of directed sets, so it is a Scott closed subset of $Q$.

 (4) Since $F$ is an isomorphism between the lattices $C_{\sigma}(P)$ and $C_{\sigma}(Q)$, $Q=F(P)=F(sup_{C_{\sigma}(P)}\{\downarrow\!x:  x\in P\})=sup_{C_{\sigma}(Q)}\{F(\downarrow\!x): x\in P\}=sup_{C_{\sigma}(Q)}\{\downarrow\!f(x): x\in P\}$.

  For each $x\in P$, $\downarrow\!f(x)\subseteq f(P)$ and $f(P)$ is a Scott closed set of $Q$, it holds then that $sup_{C_{\sigma}(Q)}\{\downarrow\!f(x): x\in P\}\subseteq f(P)$. Therefore $Q\subseteq f(P)$, which implies $Q=f(P)$. Hence  $f$ is also surjective. The proof is thus completed.
\end{proof}

If $x\in P$ is a down-linear element of a dcpo $P$, then $\dr\!x$ is a chain, so it is continuous (hence quasicontinuous).

\begin{cor}\label{tm-c1-c2-faithful}
If $P$ is a dcpo satisfying the following conditions, then $P$ is SCL-faithful:

(1) $P$ is bounded-sober.

(2) every element $a\in P$ is the supremum of a directed set of down-linear elements.
\end{cor}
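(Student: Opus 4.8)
The plan is to deduce this corollary directly from Theorem \ref{quasicontinuous elements}, whose two hypotheses it is designed to match. The first hypothesis of that theorem---that $\Sigma P$ is bounded-sober---is literally condition (1) here, given the stated convention that a dcpo is called bounded-sober precisely when its Scott space is bounded-sober. So the only thing left to check is that condition (2) of the corollary implies condition (2) of the theorem; that is, that expressing each element as a directed supremum of down-linear elements is a special case of expressing it as a directed supremum of quasicontinuous elements.

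To this end I would verify that every down-linear element is a quasicontinuous element. If $x$ is down-linear, then by definition $\dr x$ is a chain. A chain is a continuous dcpo (indeed, for $y < x'$ in a chain one checks $y \ll x'$ directly by testing against directed subsets with supremum above $x'$, whence $x' = \sup\{y : y \ll x'\}$), and every continuous dcpo is quasicontinuous. Hence $\dr x$ is a quasicontinuous dcpo, which is exactly the requirement for $x$ to be a quasicontinuous element. This is precisely the observation already recorded in the remark immediately preceding the statement.

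With that in hand, any directed set of down-linear elements is a fortiori a directed set of quasicontinuous elements, so condition (2) of the corollary yields condition (2) of Theorem \ref{quasicontinuous elements}. Both hypotheses of the theorem being satisfied, $P$ is SCL-faithful. I expect no genuine obstacle here: all the substance lives in Theorem \ref{quasicontinuous elements}, and the corollary is merely the clean specialization obtained by restricting attention to down-linear (hence chain-based, hence continuous) elements in place of general quasicontinuous ones.
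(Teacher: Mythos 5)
Your proposal is correct and is exactly the paper's intended argument: the remark immediately preceding the corollary records that a down-linear element has $\dr x$ a chain, hence a continuous (so quasicontinuous) dcpo, making every down-linear element a quasicontinuous element, and the corollary then follows by specializing Theorem \ref{quasicontinuous elements}. Your verification that chains are continuous dcpos fills in the only detail the paper leaves implicit.
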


\begin{example}\label{ex-Kou-non-sob}
In order to answer the question whether every well-filtered dcpo is sober posed by Heckmann\cite{heckmann},  Kou \cite{kouhui} constructed another non-sober dcpo $P$ as follows:

Let  $X=\{x\in\mathbb{R}: 0 < x \le 1\}$,  $P_0=\{(k, a, b)\in \mathbb{R}: 0 < k <1,  0 < b \le a \leq 1\}$  and
$$P=X\cup P_0.$$
Define the partial order $\sqsubseteq$ on $P$ as follows:

(i)  for $x_1, x_2\in X$, $x_1\sqsubseteq x_2$ iff $x_1=x_2$;

(ii) $(k_1, a_1, b_1) \sqsubseteq (k_2, a_2, b_2)$ iff $k_1\le k_2, a_1=a_2$ and $b_1=b_2$.

(iii) $(k, a, b) \sqsubseteq x $ iff  $a=x$ or  $kb\le x < b$.

If $u=(h, a, b)\in P_0$, then $\dr\!u=\{k, a, b): k\leq h\}$ is a chain. If $u=x\in P_0$, then $u = \bigvee\{(k, x, x): 0<k<1\}$, where each $(k, x, x)$ is a down-linear element and $\{(k, x, x): 0<k<1\}$ is a chain. Thus $P$ satisfies (2) of Corollary \ref{tm-c1-c2-faithful}.

Let $F$ be an irreducible nonempty Scott closed set of $P$ with an upper bound $v$. If $v=(h, a, b)\in P_0$, then $F\subseteq \dr (h, a, b)=\{(k, a, b): k\leq h\}$. Take $m=\bigvee\{k: (k, a, b)\in F\}$. Then $F=\dr (m, a, b)$, is the closure of  point $(m, a, b)$.

Now assume that $F$ does not have an upper bound in $P_0$, then $v=x$ for some $x\in P_0$. If $v\not\in F$, then due to the irreducibility of $F$, there exist $a, b$ such that $F\subseteq \{(k,a, b): 0<k<1\}$, which will imply that $F$ has an upper bound of the form $(m, a, b)$, contradicting the assumption. Therefore $v\in F$, implying that $F=\dr v$ (note that $F=\dr F$ is a lower set) is the  closure of point $v$.

It thus follows that $P$ satisfies (1) as well. By Corollary \ref{tm-c1-c2-faithful}, $P$ is SCL-faithful.

\end{example}

%%%%%%%%%%%%%%%%%%

In \cite{Ho-Zhao}, Ho and Zhao introduced the following notions.
\begin{definition}\label{dn-beneath-c-compact}
  Let $L$ be a poset and $x, y \in L$. The element  $x$ is {\em beneath} $y$,
denoted by $x\prec y$, if for every nonempty Scott-closed set $S\subseteq L$ with
$\bigvee S$ existing, $y\le \bigvee S$   implies  $x\in S$.
An element $x$ of $L$ is called {\em C-compact} if $x\prec x$.
Let $\kappa(L)$  denote the set of all the C-compact elements of $L$.
\end{definition}

Let $P$ be a poset, $A\subseteq P$ finite.  The set $mub(A)$ of the minimal upper bounds of $A$ is complete, if for any upper bound $x$ of $A$, there exists $y\in mub(A)$ such that $y\le x$.

A poset $P$ is said to satisfy property $m$, if for all finite set $A\subseteq P$, $mub(A)$ is complete.

A poset $P$ is said to satisfy property $M$, if $P$ satisfies property $m$ and for all finite set $A\subseteq P$, $mub(A)$ is finite.

\begin{remark}
Let $L$ be a complete lattice and  $a\in L$ be a C-compact element. If  $x, y\in L$ such that $a\leq x\vee y$,
then $a\leq \bigvee (\downarrow\!x\cup\downarrow\!y)$ and $\downarrow\!x\cup\downarrow\!y$ is Scott closed, so $a\in \downarrow\!x\cup\downarrow\!y$, implying $a\le x $ or $a\le y$. Thus $a$ is $\vee$-irreducible.
\end{remark}

\begin{cor}
For any  dcpo $P$, $\kappa(C_{\sigma}(P))\subseteq Irr_{\sigma}(P)$. That is C-compact closet sets are all irreducible.
\end{cor}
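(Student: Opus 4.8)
The plan is to read the statement off the preceding Remark, once we observe that the binary join in the complete lattice $C_{\sigma}(P)$ is nothing but set-theoretic union.

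First I would record that for Scott closed sets $A,B$ of $P$ their join in $C_{\sigma}(P)$ is $A\cup B$. Indeed, $P\setminus(A\cup B)=(P\setminus A)\cap(P\setminus B)$ is an intersection of two Scott open sets, hence Scott open, so $A\cup B$ is again Scott closed; being the smallest Scott closed set containing both $A$ and $B$, it is their supremum in $C_{\sigma}(P)$. (Recall also that $C_{\sigma}(P)$ is a complete lattice, since an arbitrary intersection of Scott closed sets is Scott closed and $P$ is the top element, so the hypotheses of the Remark are met with $L=C_{\sigma}(P)$.)

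Now let $F\in\kappa(C_{\sigma}(P))$. The preceding Remark applies and shows that $F$ is $\vee$-irreducible: for all $A,B\in C_{\sigma}(P)$, $F\subseteq A\vee B$ implies $F\subseteq A$ or $F\subseteq B$. Substituting $A\vee B=A\cup B$ from the previous step, this becomes the implication ``$F\subseteq A\cup B$ implies $F\subseteq A$ or $F\subseteq B$'', which is precisely the condition that $F$ be an irreducible subset of $\Sigma P$ in its Scott topology. Hence each C-compact element of $C_{\sigma}(P)$ is an irreducible Scott closed set, and so $\kappa(C_{\sigma}(P))\subseteq Irr_{\sigma}(P)$.

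I do not expect a genuine obstacle here: all the real work has already been carried out in the Remark, and the corollary is obtained merely by translating its conclusion of $\vee$-irreducibility into topological irreducibility through the identity $A\vee B=A\cup B$. The one point deserving a word of care is the bottom element $\emptyset$ of $C_{\sigma}(P)$, which is (vacuously) C-compact yet is excluded from $Irr_{\sigma}(P)$ by the convention that irreducible sets are nonempty; this is a harmless degenerate case, and every nonempty C-compact Scott closed set lies in $Irr_{\sigma}(P)$.
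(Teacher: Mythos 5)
Your proof is correct and follows exactly the route the paper intends: the corollary is an immediate consequence of the preceding Remark applied to $L=C_{\sigma}(P)$, using the fact that binary joins in $C_{\sigma}(P)$ are unions to translate $\vee$-irreducibility into topological irreducibility. Your side remark about the bottom element $\emptyset$ (C-compact but excluded from $Irr_{\sigma}(P)$ by the nonemptiness convention) is a legitimate, if minor, observation that the paper glosses over.
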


\begin{lem} {\em \cite{He-Xu}}\label{lm-M-c-compact-ideal} Let $P$ be a dcpo. Then

{\em (1)} For all $x\in P$, $\downarrow\!x\in \kappa(C_{\sigma}(P))$.

{\em (2)} If $P$ satisfies property $M$, then  $A\in\kappa(C_{\sigma}(P))$ iff $A= \downarrow\!x$
for some $x\in P$.
\end{lem}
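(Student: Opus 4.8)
The plan is to treat the two parts separately: the first is routine, while the second is where property $M$ does all the work.

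For part (1) I would first record that $\downarrow\!x$ is Scott closed (it is a lower set, and the supremum of a directed subset of $\downarrow\!x$ stays below $x$), so $\downarrow\!x\in C_{\sigma}(P)$. The observation that drives everything is a lemma on how suprema are formed in $C_{\sigma}(P)$: if $\mathcal S\subseteq C_{\sigma}(P)$ is Scott closed \emph{as a subset of the dcpo $C_{\sigma}(P)$}, then $\bigvee\mathcal S=\bigcup\mathcal S$, that is, the union of the members of $\mathcal S$ is already a Scott closed subset of $P$. Indeed $\bigcup\mathcal S$ is a lower set, and if $D\subseteq\bigcup\mathcal S$ is directed then for each $e\in D$ there is $A_e\in\mathcal S$ with $\downarrow\!e\subseteq A_e$, so $\downarrow\!e\in\mathcal S$ since $\mathcal S$ is a lower set of $C_{\sigma}(P)$; as $\{\downarrow\!e:e\in D\}$ is directed in $C_{\sigma}(P)$ with supremum $\downarrow\!\bigvee\!D$ and $\mathcal S$ is closed under directed suprema, we get $\downarrow\!\bigvee\!D\in\mathcal S$, whence $\bigvee\!D\in\bigcup\mathcal S$. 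Granting this, C-compactness of $\downarrow\!x$ is immediate: whenever $\downarrow\!x\subseteq\bigvee\mathcal S=\bigcup\mathcal S$ for a nonempty Scott closed $\mathcal S$, the point $x$ lies in some $A\in\mathcal S$, so $\downarrow\!x\subseteq A$ and therefore $\downarrow\!x\in\mathcal S$.

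Part (2) is an equivalence whose right-to-left implication is exactly part (1). For the converse, let $A\in\kappa(C_{\sigma}(P))$ be nonempty; by the preceding Corollary $A$ is irreducible. I would reduce the goal $A=\downarrow\!x$ to showing that $A$ has a greatest element, equivalently (since $A$ is a Scott closed sub-dcpo) that $A$ is \emph{directed}: if $A$ is directed then $\bigvee\!A\in A$ is its maximum and $A=\downarrow\!\bigvee\!A$. The supremum lemma above reformulates C-compactness conveniently: $A$ is C-compact iff for every nonempty Scott closed $\mathcal S\subseteq C_{\sigma}(P)$, $A\subseteq\bigcup\mathcal S$ forces $A\in\mathcal S$. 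In particular $A$ cannot be exhausted by a Scott closed family of its proper subsets, which intuitively forces the existence of a point of $A$ belonging to no proper Scott closed subset, i.e.\ a greatest element.

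The crux, and the sole place where property $M$ is needed, is turning this intuition into an honest contradiction. Suppose $A$ is not directed, so there are $a,b\in A$ with no common upper bound in $A$. Here property $M$ enters: $mub(\{a,b\})=\{u_{1},\dots,u_{n}\}$ is \emph{finite} and \emph{complete}, and since any element of $A$ above some $u_{i}$ would be a common upper bound of $a,b$ in $A$, we obtain $\uparrow\!u_{i}\cap A=\emptyset$ for every $i$. The strategy is to build from these finitely many minimal upper bounds a nonempty Scott closed family $\mathcal S\subseteq C_{\sigma}(P)$ of proper subsets of $A$ with $\bigcup\mathcal S=A$; the natural candidate is $\mathcal S=\{B\in C_{\sigma}(P):B\subseteq A,\ \{a,b\}\not\subseteq B\}$, which clearly covers $A$ (each $\downarrow\!x$ with $x\in A$ omits $a$ or $b$, since no $x\in A$ is a common upper bound) and omits $A$ itself. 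The \textbf{main obstacle} is verifying that $\mathcal S$ (or a suitable refinement of it) is closed under directed suprema; this is the genuinely technical heart of the argument carried out in \cite{He-Xu}. It is exactly here that finiteness and completeness of $mub(\{a,b\})$ must be used to prevent a directed join formed inside $A$ from escaping the family by sneaking up to an upper bound of $\{a,b\}$, the relevant upper bounds all lying in the finitely many sets $\uparrow\!u_{i}$ disjoint from $A$. That this phenomenon is precisely what property $M$ rules out is corroborated by Johnstone's dcpo of Example~\ref{ex-JS-non-sober}, where two elements can have infinitely many incomparable minimal upper bounds, so this class must be handled by the separate Theorem~\ref{main theorem} instead. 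Once closedness is secured, C-compactness gives $A\in\mathcal S$, a contradiction; hence $A$ is directed and $A=\downarrow\!\bigvee\!A$.
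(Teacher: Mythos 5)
The paper does not prove this lemma at all --- it is quoted from \cite{He-Xu} --- so there is no internal argument to compare against and your proposal must stand on its own. Part (1) does stand: the auxiliary fact that $\bigvee\mathcal S=\bigcup\mathcal S$ for any family $\mathcal S\subseteq C_{\sigma}(P)$ that is Scott closed in the dcpo $C_{\sigma}(P)$ is proved correctly (it is the key lemma of \cite{Ho-Zhao}), and the deduction that $\downarrow\!x$ is C-compact is complete.

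Part (2), however, contains a genuine gap that you yourself flag: you never verify that $\mathcal S=\{B\in C_{\sigma}(P):B\subseteq A,\ \{a,b\}\not\subseteq B\}$ is closed under directed suprema, and that verification is the entire content of the hard implication --- everything you do establish (that $\mathcal S$ covers $A$, is a lower set, and omits $A$) is routine and uses no hypothesis on $P$. Worse, the mechanism you propose for closing the gap does not match the actual obstruction. Given a directed subfamily of $\mathcal S$, directedness forces all its members to omit $a$ (say), so the supremum $cl(\bigcup_i B_i)$ can leave $\mathcal S$ only if the Scott closure \emph{adds} the point $a$, i.e. only if $a$ arises as an (iterated) directed supremum of elements of $A\setminus\{a\}$ while $b$ is already present. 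This failure mode involves no upper bound of $\{a,b\}$ at all, so the finiteness and completeness of $mub(\{a,b\})$ do not visibly bear on it; your remark that property $M$ stops joins from ``sneaking up to an upper bound of $\{a,b\}$'' is aimed at the wrong target, since those upper bounds all lie in $\uparrow\!a\cap\uparrow\!b$, which is disjoint from $A$, and every member of $\mathcal S$ and every supremum of members of $\mathcal S$ stays inside the Scott closed set $A$. (You also invoke irreducibility of $A$ via the Corollary but never use it.) As written, the argument proves only the easy direction of (2); a correct proof of the converse requires either a different family or a genuinely different deployment of property $M$, and for that one must consult \cite{He-Xu}.
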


The following theorem gives the third  class of SCL-faithful dcpos using property M.

\begin{thm}
If $P$ is a dcpo satisfying property $M$ and the condition (2) in Theorem \ref{quasicontinuous elements}, then $P$ is SCL-faithful
\end{thm}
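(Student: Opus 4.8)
The plan is to run the argument of Theorem~\ref{quasicontinuous elements} almost verbatim, using property $M$ exactly where that proof invoked bounded sobriety. Let $Q$ be a dcpo and $F\colon C_{\sigma}(P)\lra C_{\sigma}(Q)$ an isomorphism. First I would reconstruct the monotone map $f\colon P\lra Q$ precisely as in parts (1) and (2) of that proof: for a quasicontinuous element $x$ the faithfulness of the quasicontinuous dcpo $\dr x$ forces $F(\dr x)$ to have a largest element $f(x)$, and condition~(2) lets me extend $f$ to all of $P$ via directed suprema. As there, this yields $F(\dr x)=\dr f(x)$ for every $x\in P$, together with $f(x_1)\le f(x_2)\iff x_1\le x_2$, so that $f$ is an order embedding. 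This part uses condition~(2) only.

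The single step where Theorem~\ref{quasicontinuous elements} used bounded sobriety is the proof that $f$ is onto, namely the passage from an irreducible Scott-closed set $H\subseteq\dr x$ with $F(H)$ principal to the conclusion that $H$ itself is principal. Here I would instead exploit that the relation $\prec$ of Definition~\ref{dn-beneath-c-compact}, and hence the set $\kappa(-)$ of C-compact elements, is defined purely order-theoretically (in terms of Scott-closed subsets and their suprema in the ambient lattice) and is therefore preserved by the lattice isomorphism $F$. Concretely, given $y\in\dr f(P)$, choose $x$ with $\dr y\subseteq F(\dr x)$ and set $H:=F^{-1}(\dr y)$. By Lemma~\ref{lm-M-c-compact-ideal}(1) the ideal $\dr y$ is C-compact in $C_{\sigma}(Q)$, so $H$ is C-compact in $C_{\sigma}(P)$; since $P$ satisfies property $M$, Lemma~\ref{lm-M-c-compact-ideal}(2) gives $H=\dr x'$ for some $x'\in P$. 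Then $\dr f(x')=F(\dr x')=F(H)=\dr y$, whence $y=f(x')\in f(P)$.

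It follows that $f(P)$ is a lower set; as in part~(4) of the earlier proof it is closed under directed suprema (if $\{f(x_i)\}$ is directed then so is $\{x_i\}$, because $f$ is an order embedding, and $f$ preserves the supremum), hence Scott closed, and the computation $Q=F(P)=\sup_{C_{\sigma}(Q)}\{\dr f(x):x\in P\}\subseteq f(P)$ forces $f(P)=Q$. Thus $f$ is a surjective order embedding, i.e.\ an isomorphism, and $P\cong Q$.

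I do not expect a serious difficulty in this route: the key realization is that one never has to invert an arbitrary bounded irreducible Scott-closed set of $P$, but only the preimages of principal ideals of $Q$, and these are automatically C-compact. The two points to verify with care are (i) that C-compactness is genuinely transported across $F$, which rests entirely on the order-theoretic phrasing of $\prec$, and (ii) that the construction of $f$ and the surjectivity computation really use condition~(2) alone. The tempting alternative --- to prove outright that property $M$ forces $\Sigma\!P$ to be bounded sober and then quote Theorem~\ref{quasicontinuous elements} as a black box --- is, I expect, the genuinely hard path: controlling a general bounded irreducible closed set through its finitely many minimal upper bounds and irreducibility appears delicate, and the argument above sidesteps it altogether.
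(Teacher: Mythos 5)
Your proposal is correct and takes essentially the same route as the paper: both reuse parts (1)--(2) of the proof of Theorem \ref{quasicontinuous elements} (which do not need bounded sobriety) to obtain the order embedding $f$ with $F(\dr x)=\dr f(x)$, and both replace the bounded-sobriety step by the fact that C-compactness is an order-theoretic notion preserved by $F$, so that Lemma \ref{lm-M-c-compact-ideal} together with property $M$ forces $F^{-1}$ of a principal ideal of $Q$ to be a principal ideal of $P$. The only cosmetic difference is that the paper packages this as a map $g\colon Q\to P$ with $g\circ f=\id_P$ (a surjective order embedding, hence an isomorphism), whereas you show directly that $f$ is onto; note that your detour through ``$f(P)$ is Scott closed'' is dispensable, since your C-compactness argument applies verbatim to every $y\in Q$, not just to $y\in\dr f(P)$.
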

\begin{proof}
Let $P$ be a dcpo satisfying condition (2) in Theorem \ref{quasicontinuous elements} and property $M$, and $Q$ be a dcpo with an order isomorphism $F: C_{\sigma}(P)\rightarrow C_{\sigma}(Q)$.

 Then the restrictions $F: \kappa(C_{\sigma}(P))\rightarrow\kappa(C_{\sigma}(Q))$ and $F: Irr_{\sigma}(P)\rightarrow Irr_{\sigma}(Q)$ are all order isomorphisms.

For each $q\in Q$, by Lemma \ref{lm-M-c-compact-ideal}(1),  $\downarrow\!q\in \kappa(C_{\sigma}(Q))$,  then $F^{-1}(\downarrow\!q)={\downarrow\!x_q}$ for a unique $x\in P$ by Lemma \ref{lm-M-c-compact-ideal}(2).  Now  define a map $g: Q\to P$ such that $g(q)=x_q$ iff $F^{-1}({\downarrow\!q})={\downarrow\!x_q}$. The mapping $g$ satisfies the condition that $g(q_1)\leq g(q_2)$ iff $q_1\le q_2$   since $F^{-1}$ is an isomorphism.  Note that $\kappa(C_{\sigma}(Q))\cong \kappa(C_{\sigma}(P))\cong P$ is a dcpo.

Since $P$ satisfies the condition (2) in Theorem \ref{quasicontinuous elements},  by the proof of Theorem \ref{quasicontinuous elements} there is a monotone mapping $f: P\lra Q$ such that $F(\dr\!\! x)=\dr\!\! f(x)$ holds for every $x\in P$ (note that parts (1) and (2) of proof of Theorem\ref{quasicontinuous elements} do not need the condition that $P$ is bounded sober).

Then for any $x\in P$, $\dr x=F^{-1}(\dr f(x))$, so $x=g(f(x))$. Thus $g: Q\lra P$ is also a surjective mapping, therefore  an  isomorphism between $P$ and $Q$, as desired.
\end{proof}

 Note that  Kou's and Johnstone's examples of dcpos are bounded sober but  do not have property $M$.

\section{Remarks and some possible further work}
We close the paper with some extra remarks and problems for further exploration.

\begin{remark}

(1)  Recently, Ho, Jung and Xi \cite{HJX-16}  constructed a pair of non-isomorphic dcpos having isomorphic Scott topologies,  showing the existence of non-SCL-faithful dcpos. Their  counterexample also reveals that sobriety is not a sufficient condition for a dcpo to be SCL-faithful.

(2) If $P$ is an SCL-faithful dcpo and $P^*$ is the dcpo obtained by adding a top element to $P$, then one can show that $P^*$ is also SCL-faithful.
Let $X$ be the dcpo of Johnstone. Then $X^*$ is SCL-faithful, but $X^*$ is not bounded sober ($X$ is an irreducible Scott closed set of $X^*$ which is not the closure of any point of $X^*$). Thus a SCL-faithful dcpo  need not be bounded sober. So, bounded sobriety is not a necessary condition for a dcpo to be  SCL-faithful.

(3) The bounded sobriety in Theorem \ref{quasicontinuous elements} might be further weakened. We can try other conditions which are weaker than this, like the   ``dominatedness" used in \cite{HJX-16}.

(4) Given a class $\mathcal{M}$ of dcpos, define
$${\mathcal{M}}^{\flat}=\{P: P \mbox{ is a dcpo and for any } Q\in\mathcal{M}, C_{\sigma}(P)\cong C_{\sigma}(Q) \mbox{ implies } P\cong Q\}.$$

A class $\mathcal{M}$ of dcpos is called reflexive if ${\mathcal{M}}^{\flat\flat}=\mathcal{M}$.

 The class $\mathcal{S}$ of all SCL-faithful dcpos and the class $DCPO$ of all dcpos are reflexive. Do we have other reflexive classes of dcpos other than these two?

\end{remark}
\vskip 1cm
\noindent{\bf Acknowledgement}
The second author was supported by the NSF of China (61472343, 61300153) and University Science Research Project of Jiangsu Province (15KJD110006).

\end{document}